\newif \ifDdP \DdPtrue
\newcommand{\bigtimes}{\times}
\newcommand{\lproj}[2][h]{\pi_{#1}^{#2}}
\newcommand{\UT}[1][k]{\underline{\vec{\mathsf{U}}}_T^{#1}}
\newcommand{\Uh}[1][k]{\underline{\vec{\mathsf{U}}}_h^{#1}}
\newcommand{\UhD}[1][k]{\underline{\vec{\mathsf{U}}}_{h,0}^{#1}}
\newcommand{\DT}[1][k]{D_T^{#1}}
\newcommand{\ST}[1][k]{\matr{S}_T^{#1}}
\newcommand{\pT}[1][k]{\vec{p}_T^{#1}}
\newcommand{\PT}[1][k]{\vec{P}_T^{#1}}
\newcommand{\IT}[1][k]{\underline{\vec{I}}_T^{#1}}
\newcommand{\vu}{\vec{u}}
\newcommand{\vv}{\vec{v}}
\newcommand{\vw}{\vec{w}}
\newcommand{\vf}[1][]{\vec{f}_{#1}}
\newcommand{\su}{\vec{\mathsf{u}}}
\newcommand{\hsu}{\vec{\widehat{\mathsf{u}}}}
\newcommand{\sv}{\vec{\mathsf{v}}}
\newcommand{\sw}{\vec{\mathsf{w}}}
\newcommand{\usu}[1][T]{\underline{\vec{\mathsf{u}}}_{#1}}
\newcommand{\husu}[1][T]{\underline{\vec{\widehat{\mathsf{u}}}}_{#1}}
\newcommand{\tusw}[1][T]{\underline{\vec{\widetilde{\mathsf{w}}}}_{#1}}
\newcommand{\usv}[1][T]{\underline{\vec{\mathsf{v}}}_{#1}}
\newcommand{\usw}[1][T]{\underline{\vec{\mathsf{w}}}_{#1}}
\newcommand{\cT}{\underline{\vec{\mathsf{c}}}_T^k}
\newcommand{\flux}[1][TF]{\vec{\tau}_{#1}}
\journal{Comptes Rendus de l'Académie des sciences Série I}
\begin{document}

\begin{frontmatter}

\selectlanguage{english}

\title{Equilibrated tractions for the Hybrid High-Order method}
\author[1]{Daniele A. Di Pietro}
\ead{daniele.di-pietro@univ-montp2.fr}
\address[1]{%
  University of Montpellier 2, I3M, 34057 Montpellier CEDEX 5, France%
}
\author[2]{Alexandre Ern}
\ead{ern@cermics.enpc.fr}
\address[2]{%
  University Paris-Est,
  CERMICS (ENPC), 6--8 avenue Blaise Pascal, 77455,
  Marne la Vall\'ee cedex 2, France%
}

\begin{abstract}
  We show how to recover equilibrated face tractions for the hybrid high-order method for linear elasticity recently introduced in~\cite{Di-Pietro.Ern:15}, and prove that these tractions are optimally convergent.
\vskip 0.5\baselineskip

\selectlanguage{francais}
\noindent{\bf R\'esum\'e} \vskip 0.5\baselineskip \noindent {\bf Tractions équilibrées pour la méthode hybride d'ordre élevé.}
Nous montrons comment obtenir des tractions de face équilibrées pour la méthode hybride d'ordre élevé pour l'élasticité linéaire rrécemment introduite dans~\cite{Di-Pietro.Ern:15} et prouvons que ces tractions convergent de manière optimale.
\end{abstract}
\end{frontmatter}


\selectlanguage{english}

\section{Introduction}

Let $\Omega\subset\Real^d$, $d\in\{2,3\}$, denote a bounded connected polygonal or polyhedral domain. 
For $X\subset\overline\Omega$, we denote by $(\cdot,\cdot)_X$ and $\norm[X]{{\cdot}}$ respectively the standard inner product and norm of $L^2(X)$, and a similar notation is used for $L^2(X)^d$ and $L^2(X)^{d\times d}$.
For a given external load $\vf\in L^2(\Omega)^d$, we consider the linear elasticity  problem:
Find $\vu\in H_0^1(\Omega)^d$ such that
\begin{equation}
  \label{eq:weak}
  2\mu(\GRADs\vu,\GRADs\vv)_{\Omega} + \lambda(\DIV\vu,\DIV\vv)_{\Omega}
  = (\vf,\vv)_{\Omega}.
\end{equation}
with $\mu>0$ and $\lambda\ge 0$ real numbers representing the scalar Lam\'{e} coefficients and $\GRADs$ denoting the symmetric gradient operator.
Classically, the solution to~\eqref{eq:weak} satisfies $-\DIV\matr{\sigma}(\vu)=\vf$ a.e. in $\Omega$ with stress tensor $\matr{\sigma}(\vu)\eqbydef2\mu\GRADs\vu + \lambda\Id(\DIV\vu)$. Denoting by $T$ an open subset of $\Omega$ with non-zero Hausdorff measure ($T$ will represent a mesh element in what follows), partial integration yields the following local equilibrium property:
\begin{equation}
\label{eq:local.eq.cont}
(\matr{\sigma}(\vu),\GRADs\sv_T)_T 
- (\matr{\sigma}(\vu)\normal_T,\sv_T)_{\partial T}
= (\vf,\sv_T)_T
\qquad\forall\sv_T\in\Poly{k}(T)^d,
\end{equation}
where $\partial T$ and $\normal_T$ denote, respectively, the boundary and outward normal to $T$.
Additionally, the normal interface tractions $\matr{\sigma}(\vu)\normal_T$ are equilibrated across $\partial T\cap\Omega$.
The goal of this work is to 
\begin{inparaenum}[(i)]
\item devise a reformulation of the Hybrid High-Order method for linear elasticity introduced in~\cite{Di-Pietro.Ern:15} that identifies its local equilibrium properties expressed by a discrete counterpart of~\eqref{eq:local.eq.cont} and 
\item to show how the corresponding equilibrated face tractions can be obtained by element-wise post-processing.
\end{inparaenum}
This is an important complement to the original analysis, as local equilibrium is an essential property in practice.
The material is organized as follows: in Section~\ref{sec:hho} we outline the original formulation of the HHO method; in Section~\ref{sec:equilibrium} we derive the local equilibrium formulation based on a new local displacement reconstruction.


\section{The Hybrid High-Order method}
\label{sec:hho}

We consider admissible mesh sequences in the sense of~\cite[Section~1.4]{Di-Pietro.Ern:12}. Each mesh $\Th$ in the sequence is a finite collection $\{T\}$ of nonempty, disjoint, open, polytopic elements such that $\closure{\Omega}=\bigcup_{T\in\Th}\closure{T}$ and $h=\max_{T\in\Th}h_T$ (with $h_T$ the diameter of $T$), and there is a matching simplicial submesh of $\Th$ with locally equivalent mesh size and which is shape-regular in the usual sense. For all $T\in\Th$, the faces of $T$ are collected in the set $\Fh[T]$ and, for all $F\in\Fh[T]$, $\normal_{TF}$ is the unit normal to $F$ pointing out of $T$.
Additionally, interfaces are collected in the set $\Fhi$ and boundary faces in $\Fhb$.
The diameter of a face $F\in\Fh$ is denoted by $h_F$.
For the sake of brevity, we abbreviate $a\lesssim b$ the inequality $a\le Cb$ for positive real numbers $a$ and $b$ and a generic constant $C$ which can depend on the mesh regularity, on $\mu$, $d$, and the polynomial degree, but is independent of $h$ and $\lambda$.
We also introduce the notation $a\simeq b$ for the uniform equivalence $a \lesssim b\lesssim a$.


Let a polynomial degree $k\ge 1$ be fixed.
The local and global spaces of degrees of freedom (DOFs) are
\begin{equation}
  \label{eq:UT}
  \UT\eqbydef\Poly{k}(T)^d\times\left\{
  \bigtimes_{F\in\Fh[T]}\Poly[d-1]{k}(F)^d
  \right\}\quad\forall T\in\Th,
  \qquad
   \Uh\eqbydef\left\{
  \bigtimes_{T\in\Th}\Poly{k}(T)^d
  \right\}\times\left\{
  \bigtimes_{F\in\Fh}\Poly[d-1]{k}(F)^d
  \right\}.
\end{equation}
A generic collection of DOFs from $\Uh$ is denoted by $\usv[h]=\big((\sv_T)_{T\in\Th},(\sv_F)_{F\in\Fh}\big)$ and, for a given $T\in\Th$, $\usv=\big(\sv_T,(\sv_F)_{F\in\Fh[T]}\big)\in\UT$ indicates its restriction to $\UT$.
For all $T\in\Th$, we define a high-order local displacement reconstruction operator $\pT:\UT\to\Poly{k+1}(T)^d$ by solving the following (well-posed) pure traction problem:
For a given $\usv\in\UT$, $\pT\usv$ is such that 
\begin{equation}
  \label{eq:pT}
  (\GRADs\pT\usv,\GRADs\vw)_T 
  = (\GRADs\sv_T,\GRADs\vw)_T
  + \sum_{F\in\Fh[T]}(\sv_F-\sv_T,\GRADs\vw\,\normal_{TF})_F \qquad\forall \vw\in\Poly{k+1}(T)^d,
\end{equation}
and the rigid-body motion components of $\pT\usv$ are prescribed so that 
$\int_T\pT\usv=\int_T\sv_T$ and
$\int_T\GRADss(\pT\usv)= \sum_{F\in\Fh[T]} \int_F \frac12(\normal_{TF}{\otimes}\sv_F-\sv_F{\otimes}\normal_{TF})$ where $\GRADss$ is the skew-symmetric gradient operator. 
Additionally, we define the divergence reconstruction $\DT:\UT\to\Poly{k}(T)$ such that, for a given $\usv\in\UT$,
\begin{equation}
  \label{eq:DT}
  (\DT\usv,q)_T
  = (\DIV\sv_T,q)_T + \sum_{F\in\Fh[T]}(\sv_F-\sv_T,q\normal_{TF})_F\qquad \forall q\in\Poly{k}(T).
\end{equation}
We introduce the local bilinear form $a_T:\UT\times\UT\to\Real$ such that
\begin{equation}
  \label{eq:aT}
  a_T(\usw,\usv)\eqbydef 
  2\mu\left\{
  (\GRADs\pT\usw,\GRADs\pT\usv)_T
  + s_T(\usw,\usv)
  \right\}
  + \lambda(\DT\usw,\DT\usv)_T,
\end{equation}
where the stabilizing bilinear form $s_T:\UT\times\UT\to\Real$ is such that
\begin{equation}
  \label{eq:sT}
  s_T(\usw,\usv) \eqbydef 
  \sum_{F\in\Fh[T]} h_F^{-1} (\lproj[F]{k}(\PT\usw-\sw_F), \lproj[F]{k}(\PT\usv-\sv_F))_F,
\end{equation}
and a second displacement reconstruction $\PT:\UT\to\Poly{k+1}(T)$ is defined such that, for all $\usv\in\UT$,
$
\PT\usv\eqbydef \sv_T + (\pT\usv - \lproj[T]{k}\pT).
$
Let $\IT:H^{1}(T)^d\to\UT$ be the reduction map such that, for all $T\in\Th$ and all $\vv\in H^1(T)^d$,
$\IT\vv= \big(\lproj[T]{k}\vv,(\lproj[F]{k}\vv)_{F\in\Fh[T]}\big)$.
The potential reconstruction $\pT$ and the bilinear form $s_T$ are conceived so that they satisfy the following two key properties:
\begin{compactenum}[(i)]
\item \emph{Stability.} For all $\usv\in\UT$,
  \begin{equation}
  \label{eq:stability}
  \norm[T]{\GRADs\pT\usv}^2 + s_T(\usv,\usv)
  \simeq
  \norm[T]{\GRADs\sv_T}^2 + j_T(\usv,\usv),
  \end{equation} 
  with bilinear form $j_T:\UT\times\UT\to\Real$ such that
$j_T(\usw,\usv) \eqbydef \sum_{F\in\Fh[T]} h_F^{-1} (\sw_T-\sw_F, \sv_T-\sv_F)_F.$
\item \emph{Approximation.} For all $\vv\in H^{k+2}(T)^d$, 
\begin{equation}
  \label{eq:approx.pT.sT}
  \big\{
  \norm[T]{\GRADs(\vv-\pT\IT\vv)}^2 + s_T(\IT\vv,\IT\vv)
  \big\}^{\nicefrac12}\lesssim h_T^{k+1}\norm[H^{k+2}(T)^d]{\vv}.
\end{equation}
\end{compactenum}
We observe that, unlike $s_T$, the stabilization bilinear form $j_T$ only satisfies $j_T(\IT\vv,\IT\vv)\lesssim h^k\norm[H^{k+1}(T)^d]{\vv}$.
The discrete problem reads: Find $\usu[h]\in\UhD\eqbydef\{\usu[h]\in\Uh\st \su_F\equiv \vec{0}\quad\forall F\in\Fhb\}$ such that
\begin{equation}
  \label{eq:hho}
  a_h(\usu[h],\usv[h]) 
  \eqbydef\sum_{T\in\Th}a_T(\usw,\usv)
  = \sum_{T\in\Th}(\vf,\sv_T)_T\qquad\forall\usv[h]\in\UhD.
\end{equation}
The following convergence result was proved in~\cite{Di-Pietro.Ern:15}:
\begin{theorem}[Energy error estimate]
  \label{thm:conv.rate}
  Let $\vu\in H_0^1(\Omega)^d$ and $\usu[h]\in\UhD$ denote the unique solutions to~\eqref{eq:weak} and~\eqref{eq:hho}, respectively, and assume $\vu\in H^{k+2}(\Omega)^d$ and $\DIV\vu\in H^{k+1}(\Omega)$.
  Then, letting $\husu[h]\in\UhD$ be such that $\husu\eqbydef\IT\vu$ for all $T\in\Th$, the following holds (with $\norm[a,T]{\usv}^2=a_T(\usv,\usv)$ for all $\usv\in\UT$):
  \begin{equation}
    \label{eq:conv.rate.en}
    \sum_{T\in\Th}\norm[a,T]{\usu-\husu}^2
    \lesssim h^{2(k+1)}\left(
    \norm[H^{k+2}(\Omega)^d]{\vu}
    + \lambda\norm[H^{k+1}(\Omega)]{\DIV\vu}
    \right)^2.
  \end{equation}
Moreover, assuming elliptic regularity, 
$\sum_{T\in\Th}\norm[L^2(T)^d]{\vu-\pT\usu}^2
    \lesssim h^{2(k+2)}\left(
    \norm[H^{k+2}(\Omega)^d]{\vu}
    + \lambda\norm[H^{k+1}(\Omega)]{\DIV\vu}
    \right)^2$.
\end{theorem}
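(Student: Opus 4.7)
The plan is to combine a third Strang-type consistency argument with $\lambda$-robust coercivity for the energy estimate, and an Aubin--Nitsche duality for the $L^2$-bound. For coercivity, the stability property~\eqref{eq:stability} together with a discrete Korn inequality on $\UhD$ gives that $\sum_{T\in\Th}\norm[a,T]{\usv}^2$ is equivalent, uniformly in $\lambda$, to $2\mu\sum_T(\norm[T]{\GRADs\sv_T}^2+j_T(\usv,\usv)) + \lambda\sum_T\norm[T]{\DT\usv}^2$; the key point is that $\lambda$ only multiplies a positive semi-definite term, so the coercivity constant is $\lambda$-robust.

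The analysis then hinges on two commuting identities that follow from the defining relations~\eqref{eq:pT}--\eqref{eq:DT} by testing against suitable polynomials and exploiting the $L^2$-orthogonality of $\lproj[T]{k}$ and $\lproj[F]{k}$: (i) $\pT\IT\vu$ coincides with the elliptic projection of $\vu$ onto $\Poly{k+1}(T)^d$ (with the same rigid-body motion), so that $(\GRADs\pT\IT\vu,\GRADs\vw)_T=(\GRADs\vu,\GRADs\vw)_T$ for every $\vw\in\Poly{k+1}(T)^d$; and (ii) $\DT\IT\vu=\lproj[T]{k}\DIV\vu$. From the discrete problem~\eqref{eq:hho} one then obtains $a_h(\usu[h]-\husu[h],\usv[h])=\mathcal{E}_h(\usv[h])$ for all $\usv[h]\in\UhD$, with consistency error $\mathcal{E}_h(\usv[h])\eqbydef\sum_T(\vf,\sv_T)_T-a_h(\husu[h],\usv[h])$. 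Combining (i)--(ii), element-wise integration by parts, and the strong equation $-\DIV\matr{\sigma}(\vu)=\vf$ reduces $\mathcal{E}_h(\usv[h])$ to boundary residuals of the form $\sum_T\sum_{F\in\Fh[T]}((I-\lproj[F]{k})\matr{\sigma}(\vu)\normal_{TF},\sv_F-\sv_T)_F$ plus the stabilization contribution $2\mu\sum_T s_T(\husu,\usv)$. Trace approximation of $\matr{\sigma}(\vu)$, applied separately to its deviatoric and volumetric components so as to pair the $\lambda$-part with $\sqrt\lambda\,\norm[T]{\DT\usv}$ via identity~(ii), together with~\eqref{eq:approx.pT.sT} for the stabilization, yields $|\mathcal{E}_h(\usv[h])|\lesssim h^{k+1}(\norm[H^{k+2}(\Omega)^d]{\vu}+\lambda\norm[H^{k+1}(\Omega)]{\DIV\vu})\big(\sum_T\norm[a,T]{\usv}^2\big)^{1/2}$. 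Choosing $\usv[h]=\usu[h]-\husu[h]$ and invoking coercivity then produces~\eqref{eq:conv.rate.en}.

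For the $L^2$-estimate I would proceed by duality. Let $\vec{z}\in H_0^1(\Omega)^d$ solve the adjoint problem $-\DIV\matr{\sigma}(\vec{z})=\vu-w_h$ with $w_h|_T\eqbydef\pT\usu$; elliptic regularity gives $\vec{z}\in H^2(\Omega)^d$ with $\norm[H^2(\Omega)^d]{\vec{z}}\lesssim\norm[L^2(\Omega)^d]{\vu-w_h}$. Pairing this adjoint problem with $\vu-w_h$, inserting $\IT\vec{z}$ into the discrete error equation, and replaying the consistency analysis above on $\vec{z}$ (which gains one extra order of $h$ because only $H^2$-regularity is invoked) in combination with~\eqref{eq:conv.rate.en} yields the announced $h^{k+2}$ bound.

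The main technical obstacle is preserving a linear dependence on $\lambda$ in~\eqref{eq:conv.rate.en}, which is essential for robustness in the incompressible limit $\lambda\to\infty$. This forces a scrupulous separate treatment of the deviatoric and volumetric contributions to the consistency error and a systematic use of identity~(ii) to pair the $\lambda$-weighted residual directly with $\lambda\DT\usv$, rather than bounding it crudely through $(\mu+\lambda)^{1/2}$, which would introduce a higher power of $\lambda$ on the right-hand side.
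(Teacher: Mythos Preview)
The paper does not actually prove this theorem: the sentence immediately preceding the statement reads ``The following convergence result was proved in~\cite{Di-Pietro.Ern:15}'', and no proof is given here. There is therefore nothing in the present paper to compare your attempt against.

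That said, your outline is the correct one and matches the argument in the cited reference: a Strang-type estimate driven by the consistency error $\mathcal{E}_h(\usv[h])=\sum_T(\vf,\sv_T)_T-a_h(\husu[h],\usv[h])$, reduced via the commuting relations $\GRADs\pT\IT\vu=\GRADs(\text{elliptic projection of }\vu)$ and $\DT\IT\vu=\lproj[T]{k}(\DIV\vu)$ and element-wise integration by parts to face residuals plus the stabilization term, followed by an Aubin--Nitsche duality for the $L^2$-bound. One small imprecision: your remark about ``pairing the $\lambda$-part with $\sqrt\lambda\,\norm[T]{\DT\usv}$ via identity~(ii)'' is not quite how the $\lambda$-robustness is achieved. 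Identity~(ii) concerns the \emph{trial} side ($\DT\husu=\lproj[T]{k}\DIV\vu$) and is used to make the volumetric consistency defect $\lambda(\DT\husu-\DIV\vu,\DT\usv)_T$ vanish after projection; the remaining $\lambda$-weighted face residual is then bounded against the $j_T$-part of the test norm (which carries only a $\mu$-weight), producing the linear $\lambda$ factor stated in~\eqref{eq:conv.rate.en}. Your high-level description is right, but the mechanism is slightly different from what you wrote.
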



\section{Local equilibrium formulation}
\label{sec:equilibrium}

The difficulty in devising an equivalent local equilibrium formulation for problem~\eqref{eq:hho} comes from the stabilization term $s_T$, which introduces a non-trivial coupling of interface DOFs inside each element.
In this section, we introduce post-processed discrete displacement and stress reconstructions that allow us to circumvent this difficulty.
For a given element $T\in\Th$, define the following bilinear form on $\UT$:
\begin{equation}
  \label{eq:inner.prod.UT}
  \widetilde{a}_T(\usw,\usv) \eqbydef 
  2\mu\left\{(\GRADs\pT\usw,\GRADs\pT\usv)_T + j_T(\usw,\usv)\right\}
  + \lambda(\DT\usw,\DT\usv)_T,
\end{equation}
where the only difference with respect to the bilinear form $a_T$ defined by~\eqref{eq:aT} is that we have stabilized using $j_T$ instead of $s_T$.
We observe that, while proving a discrete local equilibrium relation for the method based on $\widetilde{a}_T$ would not require any local post-processing, the suboptimal consistency properties of $j_T$ would only yield $h^{2k}$ in the right-hand side of~\eqref{eq:conv.rate.en}.
Denoting by $\norm[\widetilde{a},T]{{\cdot}}$ the local seminorm induced by $\widetilde{a}_T$ on $\UT$, one can prove that, for all $\usv\in\UT$,
\begin{equation}
  \label{eq:local.norm.equiv:bis}
  \norm[\widetilde{a},T]{\usv}\simeq\norm[a,T]{\usv}.
\end{equation}
We next define the isomorphism $\cT:\UT\to\UT$ such that
\begin{equation}
  \label{eq:cT}
        \widetilde{a}_T(\cT\usw,\usv) = a_T(\usw,\usv) + (2\mu)j_T(\usw,\usv)\qquad\forall\usv\in\UT,
\end{equation}
and rigid-body motion components prescribed as above.
We also introduce the stress reconstruction $\ST:\UT\to\Poly{k}(T)^{d\times d}$ such that 
\begin{equation}
\label{eq:ST}
\ST\eqbydef (2\mu\GRADs\pT + \lambda\Id\DT) \circ\cT.
\end{equation}

\begin{lemma}[Equilibrium formulation]
The bilinear form $a_T$ defined by~\eqref{eq:aT} is such that, for all $\usw,\usv\in\UT$,
\begin{equation}
  \label{eq:aT.bis}
  a_T(\usw,\usv)
  = (\ST\usw,\GRADs\sv_T)_T
  + \sum_{F\in\Fh[T]}(\flux(\usw),\sv_F-\sv_T)_F,
\end{equation}
with interface traction $\flux:\UT\to\Poly[d-1]{k}(F)^d$ such that
\begin{equation}
  \label{eq:phiTF}
  \flux(\usw) = 
  \ST\usw\,\normal_{TF} + h_F^{-1}\left[
    \big((\cT\usw)_F-\sw_F\big) - \big((\cT\usw)_T-\sw_T\big)
    \right].
\end{equation}
\end{lemma}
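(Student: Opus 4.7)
The plan is to invert the relation \eqref{eq:cT} so as to recast $a_T$ in terms of $\widetilde{a}_T$ evaluated at $\cT\usw$, and then integrate by parts the consistent part of $\widetilde{a}_T$ face by face. Rearranging \eqref{eq:cT} immediately gives
$$a_T(\usw,\usv) = \widetilde{a}_T(\cT\usw,\usv) - 2\mu\, j_T(\usw,\usv),$$
so the task reduces to decomposing the three pieces making up $\widetilde{a}_T(\cT\usw,\usv)$ and then grouping the two $j_T$-contributions into a single face-integral residual matching the second summand in \eqref{eq:phiTF}.

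For the volume terms, I would insert the polynomial $\pT\cT\usw \in \Poly{k+1}(T)^d$ as a test function in the defining equation \eqref{eq:pT} of $\pT\usv$, and $\DT\cT\usw \in \Poly{k}(T)$ as a test function in \eqref{eq:DT} (this is legitimate since $\cT$ maps $\UT$ to itself). The resulting identities rewrite $(\GRADs\pT\cT\usw, \GRADs\pT\usv)_T$ and $(\DT\cT\usw, \DT\usv)_T$ as the sum of a volume integral pitted against $\GRADs\sv_T$ (using $\DIV\sv_T = \Id:\GRADs\sv_T$ and $(\Id\,q)\normal_{TF} = q\,\normal_{TF}$ for the divergence contribution) plus face integrals pitted against $\sv_F-\sv_T$. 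Taking the $2\mu$ and $\lambda$ linear combination and invoking the definition \eqref{eq:ST} of $\ST$ collapses the volume part into $(\ST\usw, \GRADs\sv_T)_T$ and the face part into $\sum_{F\in\Fh[T]}(\ST\usw\,\normal_{TF}, \sv_F - \sv_T)_F$; this accounts for the bulk term in \eqref{eq:aT.bis} and the first summand in \eqref{eq:phiTF}.

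The last step is to show that the leftover $2\mu[j_T(\cT\usw,\usv) - j_T(\usw,\usv)]$ reorganizes face by face into an integral with integrand proportional to $\sv_F - \sv_T$. Expanding $j_T$ linearly in its first slot and factoring the terms across the two stabilizations yields, after a sign flip on the $\sv_T - \sv_F$ factor, a contribution of the form $\sum_{F\in\Fh[T]} h_F^{-1}\bigl([((\cT\usw)_F - \sw_F) - ((\cT\usw)_T - \sw_T)],\, \sv_F - \sv_T\bigr)_F$ (with the $2\mu$ scaling kept track of), which is exactly the shape of the $h_F^{-1}$-part of $\flux$ in \eqref{eq:phiTF}. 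Adding this to the face integrand already produced in the previous step yields \eqref{eq:aT.bis}.

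The argument is essentially algebraic; the only real subtlety lies in carefully tracking signs and the multiplicative constants $2\mu$ and $\lambda$, and in verifying that the polynomial degrees align so that the test functions used in \eqref{eq:pT} and \eqref{eq:DT} are admissible. The deeper conceptual content is hidden in the definition \eqref{eq:cT} of $\cT$ itself, which is engineered precisely so that the mismatch between the stabilizations $s_T$ (appearing in $a_T$) and $j_T$ (appearing in $\widetilde{a}_T$) can be absorbed into a single consistent interface traction, thereby restoring an exact discrete counterpart of~\eqref{eq:local.eq.cont}.
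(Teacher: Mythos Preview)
Your proposal is correct and follows essentially the same route as the paper: rewrite $a_T(\usw,\usv)=\widetilde{a}_T(\cT\usw,\usv)-2\mu\,j_T(\usw,\usv)$, test \eqref{eq:pT} with $\vw=\pT\cT\usw$ and \eqref{eq:DT} with $q=\DT\cT\usw$ to produce the $(\ST\usw,\GRADs\sv_T)_T$ and $(\ST\usw\,\normal_{TF},\sv_F-\sv_T)_F$ terms, and then merge the remaining $2\mu\,j_T(\cT\usw-\usw,\usv)$ into the face tractions. The only cosmetic difference is that the paper combines the two $j_T$ contributions into $j_T(\tusw-\usw,\usv)$ before the final expansion, whereas you keep them separate until the end.
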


\begin{proof}
Let $\tusw\eqbydef\cT\usw$.
We have, using the definitions~\eqref{eq:cT} of $\cT$ and~\eqref{eq:inner.prod.UT} of the bilinear form $\widetilde{a}_T$,
$$
\begin{alignedat}{2}
a_T(\usw,\usv) 
&= \widetilde{a}_T(\tusw,\usv) - (2\mu)j_T(\usw,\usv)
\\
&= 2\mu\left\{
(\GRADs\pT\tusw,\GRADs\pT\usv)_T 
+ j_T(\tusw-\usw,\usv)
\right\}
+ \lambda (\DT\tusw,\DT\usv)_T
\\
&= (\ST\usw,\GRADs\sv_T)_T
+ \sum_{F\in\Fh[T]} (\ST\usw\,\normal_{TF},\sv_F - \sv_T)_F
+  (2\mu) j_T(\tusw-\usw,\usv),
\end{alignedat}
$$
where we have  concluded using~\eqref{eq:pT} with $\vw=\pT\tusw$,~\eqref{eq:DT} with $q=\DT\tusw$, and recalling the definition~\eqref{eq:ST} of $\ST$.
To obtain~\eqref{eq:aT.bis}, it suffices to use the definition of $j_T$.
\end{proof}

\begin{lemma}[Local equilibrium]
  \label{lem:cons.hho}
  Let $\usu[h]\in\UhD$ denote the unique solution to~\eqref{eq:hho}. Then, for all $T\in\Th$, the following discrete counterpart of the local equilibrium relation~\eqref{eq:local.eq.cont} holds:
  \begin{equation}
  \label{eq:local.eq.disc}
    (\ST\usu,\GRADs\sv_T)_T
    - \sum_{F\in\Fh[T]}(\flux(\usu),\sv_T)_F
    = (\vf,\sv_T)_T
   \qquad\forall \sv_T\in\Poly{k}(T)^d,
  \end{equation}
  and the numerical flux are equilibrated in the following sense: 
  For all $F\in\Fhi$ such that $F\subset\partial T_1\cap\partial T_2$, 
  \begin{equation}
    \label{eq:cons.hho}
    \flux[T_1F](\usu[T_1]) + \flux[T_1F](\usu[T_2]) = \vec{0}.
  \end{equation}
\end{lemma}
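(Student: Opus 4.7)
The plan is to derive both identities from the global problem~\eqref{eq:hho} by testing against carefully localized elements of $\UhD$ and using the reformulation~\eqref{eq:aT.bis} to extract each local contribution in the desired form.

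For the local equilibrium~\eqref{eq:local.eq.disc}, I would fix $T\in\Th$ and $\sv_T\in\Poly{k}(T)^d$ and construct $\usv[h]\in\UhD$ whose only nonzero DOF is the element DOF on $T$, set equal to $\sv_T$, with all other element DOFs and all face DOFs (in particular those on $\Fhb$) set to $\vec{0}$. Substituting this test vector into~\eqref{eq:hho} leaves only the $T$-summand on the left-hand side and $(\vf,\sv_T)_T$ on the right-hand side. Applying~\eqref{eq:aT.bis} with $\sv_F=\vec{0}$ for every $F\in\Fh[T]$ replaces $\sv_F-\sv_T$ by $-\sv_T$ in the boundary sum, which is exactly~\eqref{eq:local.eq.disc}.

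For the flux equilibration~\eqref{eq:cons.hho}, I would fix an interface $F\in\Fhi$ shared by $T_1$ and $T_2$, pick an arbitrary $\sv_F\in\Poly[d-1]{k}(F)^d$, and take $\usv[h]\in\UhD$ whose DOFs all vanish save for the face DOF on $F$, which is set equal to $\sv_F$; this is admissible because $F\in\Fhi$. In~\eqref{eq:hho} only $T_1$ and $T_2$ then contribute, and within each of these contributions only the $F$-term of the boundary sum in~\eqref{eq:aT.bis} survives, since $\sv_{T_i}=\vec{0}$ and $\sv_{F'}=\vec{0}$ for every other face $F'$. The right-hand side vanishes because all element DOFs are zero, leaving
$$(\flux[T_1F](\usu[T_1]),\sv_F)_F + (\flux[T_2F](\usu[T_2]),\sv_F)_F = 0.$$
Since $\sv_F$ ranges over all of $\Poly[d-1]{k}(F)^d$ and both fluxes belong to this space (inspection of~\eqref{eq:phiTF} shows that each of the three terms is the trace on $F$ of a polynomial of total degree at most $k$), the asserted equilibration follows.

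This is a standard localization argument and I do not anticipate any genuine obstacle: the substantive work was already carried out in the preceding equilibrium formulation lemma, which made the boundary traction $\flux$ explicit inside $a_T$. The only points worth double-checking are the admissibility of the localized test vectors in $\UhD$ (immediate from the way they are built) and the matching of polynomial spaces so that arbitrariness of $\sv_F$ indeed pins down the sum of the fluxes.
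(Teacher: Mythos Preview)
Your proposal is correct and follows exactly the same localization argument as the paper: test~\eqref{eq:hho} first with a vector supported on a single element DOF to obtain~\eqref{eq:local.eq.disc}, then with a vector supported on a single interior face DOF to obtain~\eqref{eq:cons.hho}, each time invoking~\eqref{eq:aT.bis}. Your additional remarks on the admissibility of the test vectors and on why the tractions lie in $\Poly[d-1]{k}(F)^d$ are sound and make the argument slightly more explicit than the paper's version.
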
%

\begin{proof}
  To prove~\eqref{eq:local.eq.disc}, let an element $T\in\Th$ be fixed, take as an ansatz collection of DOFs in~\eqref{eq:hho} $\usv[h]=\big( (\sv_T)_{T\in\Th}, (\vec{0})_{F\in\Fh}\big)$ with $\sv_T$ in $\Poly{k}(T)^d$ and $\sv_{T'}\equiv\vec{0}$ for all $T'\in\Th\setminus\{T\}$, and use~\eqref{eq:aT.bis} with $\usw=\usu$ to conclude that $a_T(\usu,\usv)$ corresponds to the left-hand side of~\eqref{eq:local.eq.disc}.
  Similarly, to prove~\eqref{eq:cons.hho}, let an interface $F\in\Fhi$ be fixed and take as an ansatz collection of DOFs in~\eqref{eq:hho} $\usv[h]=\big((\vec{0})_{T\in\Th},(\sv_F)_{F\in\Fh}\big)\in\UhD$ with $\sv_F$ in $\Poly[d-1]{k}(F)^d$ and $\sv_{F'}\equiv\vec{0}$ for all $F'\in\Fh\setminus\{F\}$.
  Then, using~\eqref{eq:aT.bis} with $\usw=\usu$ in~\eqref{eq:hho}, it is inferred that
  $
  a_h(\usu[h],\usv[h]) 
  = (\flux[T_1F](\usu[T_1]) + \flux[T_2,F](\usu[T_2]),\sv_F)_F
  = 0,
  $
  which proves the desired result since $\flux[T_1F](\usu[T_1]) + \flux[T_2F](\usu[T_2])\in\Poly[d-1]{k}(F)^d$.
\end{proof}

To conclude, we show that the locally post-processed solution  yields a new collection of DOFs that is an equally good approximation of the exact solution as is the discrete solution $\usu[h]$.
Consequently, the equilibrated face numerical tractions defined in~\eqref{eq:phiTF} optimally converge to the exact tractions.

\begin{proposition}[{Convergence for $\cT\usu$}]
  \label{prop:conv.rate.cT}
  Using the notation of Theorem~\ref{thm:conv.rate}, the following holds:
  \begin{equation}
    \label{eq:conv.rate.cT}
    \sum_{T\in\Th}\norm[a,T]{\cT\usu-\husu}^2
    \lesssim h^{2(k+1)}\left(
    \norm[H^{k+2}(\Omega)^d]{\vu}
    + \lambda\norm[H^{k+1}(\Omega)]{\DIV\vu}
    \right)^2.
  \end{equation}
\end{proposition}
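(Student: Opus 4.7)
The plan is to decompose the error as
$$\cT\usu - \husu \;=\; \cT(\usu - \husu) \;+\; (\cT\husu - \husu),$$
which is legitimate since $\cT$ is linear. The first term will be controlled by the known energy estimate of Theorem~\ref{thm:conv.rate}, and the second is a purely consistency-type term that measures how far the interpolate fails to be a fixed point of $\cT$.

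For the first piece, I would establish that $\cT$ is bounded in the $\|\cdot\|_{a,T}$-norm, i.e.\ $\norm[a,T]{\cT\usv} \lesssim \norm[a,T]{\usv}$ for every $\usv\in\UT$. Testing the defining relation~\eqref{eq:cT} with $\usv = \cT\usw$ yields
$$\norm[\widetilde{a},T]{\cT\usw}^2 = a_T(\usw,\cT\usw) + 2\mu\, j_T(\usw,\cT\usw),$$
and then Cauchy--Schwarz together with the bound $2\mu\,j_T(\usv,\usv)\lesssim \norm[a,T]{\usv}^2$ (which follows from the stability property~\eqref{eq:stability}) and the equivalence~\eqref{eq:local.norm.equiv:bis} gives $\norm[\widetilde{a},T]{\cT\usw}\lesssim \norm[a,T]{\usw}$, hence the desired $a_T$-boundedness. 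Applying this with $\usw = \usu - \husu$ and invoking~\eqref{eq:conv.rate.en} delivers the $h^{k+1}$ contribution for the first piece.

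For the second piece, set $\tusw = \cT\husu - \husu$. Using the definitions~\eqref{eq:cT}, \eqref{eq:aT}, and~\eqref{eq:inner.prod.UT}, the cross terms involving $\pT$, $\DT$, and $j_T$ cancel and we are left with
$$\widetilde{a}_T(\tusw,\usv) = 2\mu\, s_T(\husu,\usv) \qquad\forall\usv\in\UT.$$
Taking $\usv = \tusw$ and using Cauchy--Schwarz in $s_T$ together with $2\mu\, s_T(\usv,\usv)\lesssim \norm[a,T]{\usv}^2\simeq \norm[\widetilde{a},T]{\usv}^2$, I can absorb one factor of $\norm[\widetilde{a},T]{\tusw}$ to obtain
$$\norm[a,T]{\cT\husu - \husu}^2 \;\lesssim\; 2\mu\, s_T(\husu,\husu).$$
Since $\husu = \IT\vu$, the approximation property~\eqref{eq:approx.pT.sT} then gives $s_T(\husu,\husu)\lesssim h_T^{2(k+1)}\norm[H^{k+2}(T)^d]{\vu}^2$.

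Summing both contributions over $T\in\Th$ and using the triangle inequality produces~\eqref{eq:conv.rate.cT}; note that the $\lambda$-dependent contribution comes only from the first piece through Theorem~\ref{thm:conv.rate}, while the second piece is controlled by $s_T$ alone. The main obstacle I anticipate is the second step: one must verify that, despite $s_T$ and $j_T$ being genuinely different stabilizations, all the terms beyond $2\mu\, s_T(\husu,\cdot)$ really do cancel in the identity for $\widetilde{a}_T(\tusw,\usv)$; this is what lets the optimal consistency of $s_T$ (rather than the suboptimal one of $j_T$) govern the post-processed error.
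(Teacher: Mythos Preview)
Your argument is correct, including the identity you flagged as the potential obstacle: from $\widetilde{a}_T(\cT\husu,\usv)=a_T(\husu,\usv)+2\mu\,j_T(\husu,\usv)$ and the definitions~\eqref{eq:aT},~\eqref{eq:inner.prod.UT}, one indeed obtains $\widetilde{a}_T(\cT\husu-\husu,\usv)=2\mu\,s_T(\husu,\usv)$, so the cancellation goes through exactly as you hoped.

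The paper's proof is only a mild reorganization of yours. Instead of splitting $\cT\usu-\husu=\cT(\usu-\husu)+(\cT\husu-\husu)$ and treating the two pieces separately, it expands $\widetilde{a}_T(\cT\usu-\husu,\usv)$ in one shot to get
\[
\widetilde{a}_T(\cT\usu-\husu,\usv)=a_T(\usu-\husu,\usv)+2\mu\,s_T(\husu,\usv)+2\mu\,j_T(\usu-\husu,\usv),
\]
which is precisely the sum of your two identities $\widetilde{a}_T(\cT(\usu-\husu),\usv)=a_T(\usu-\husu,\usv)+2\mu\,j_T(\usu-\husu,\usv)$ and $\widetilde{a}_T(\cT\husu-\husu,\usv)=2\mu\,s_T(\husu,\usv)$. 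A single Cauchy--Schwarz step (absorbing the $j_T$ term via~\eqref{eq:stability} and~\eqref{eq:local.norm.equiv:bis}) then yields the local bound $\norm[a,T]{\cT\usu-\husu}^2\lesssim\norm[a,T]{\usu-\husu}^2+2\mu\,s_T(\husu,\husu)$, which is the same endpoint you reach after the triangle inequality. Your version has the minor advantage of isolating the uniform $\norm[a,T]{{\cdot}}$-boundedness of $\cT$ as a reusable statement; the paper's version is marginally shorter.
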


\begin{proof}
Let $T\in\Th$. Recalling~\eqref{eq:cT}, we have
$$
\begin{aligned}
  \widetilde{a}_T(\cT\usu-\husu,\usv)
  &=a_T(\usu,\usv)+(2\mu)j_T(\usu,\usv)-\widetilde{a}_T(\husu,\usv)
  \\
  &= a_T(\usu-\husu,\usv)
  + (2\mu) s_T(\husu,\usv)
  + (2\mu) j_T(\usu-\husu,\usv).
\end{aligned}
$$
Hence, using the Cauchy--Schwarz inequality followed by the stability property~\eqref{eq:stability} and multiple applications of the norm equivalence~\eqref{eq:local.norm.equiv:bis},
$$
\begin{aligned}
|\widetilde{a}_T(\cT\usu-\husu,\usv)|
&\le\left\{
\norm[a,T]{\usu-\husu}^2
+ (2\mu)s_T(\husu,\husu)
+ (2\mu)j_T(\usu-\husu,\usu-\husu)
\right\}^{\nicefrac12}\norm[\widetilde{a},T]{\usv}
\\
&\lesssim\left\{
\norm[a,T]{\usu-\husu}^2 + (2\mu)s_T(\husu,\husu)
\right\}^{\nicefrac12}\norm[\widetilde{a},T]{\usv}.
\end{aligned}
$$
Using again~\eqref{eq:local.norm.equiv:bis} followed by the latter inequality, we infer that
$$
\norm[a,T]{\cT\usu-\husu}
\lesssim\norm[\widetilde{a},T]{\cT\usu-\husu}
={\sup_{\usv\in\UT\setminus\{\mathsf{\vec{0}}\}}} \frac{\widetilde{a}_T(\cT\usu-\husu,\usv)}{\norm[\widetilde{a},T]{\usv}}
\lesssim\left\{
\norm[a,T]{\usu-\husu}^2 + (2\mu)s_T(\husu,\husu)
\right\}^{\nicefrac12}.
$$
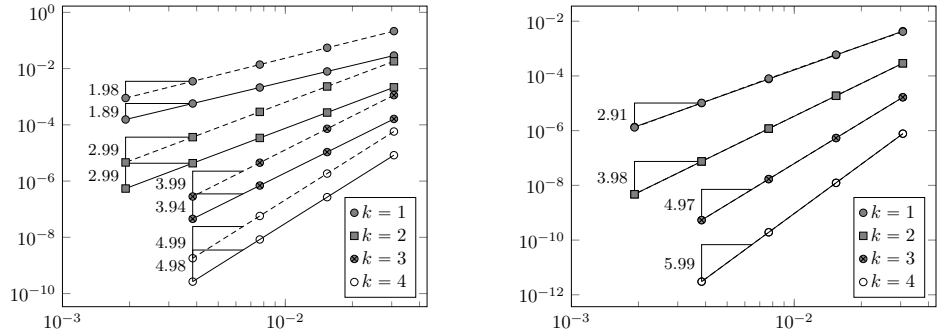
\begin{figure}
  \centering
\ifDdP
  \begin{minipage}[b]{0.4\linewidth}\centering
    \begin{tikzpicture}[scale=0.70]
      \begin{loglogaxis}[
          legend style = { legend pos = south east },
          xmin=0.001
        ]
        \addlegendimage{only marks,every mark/.append style={fill=gray},mark=*}
        \addlegendentry{$k=1$}
        \addlegendimage{only marks,every mark/.append style={fill=gray},mark=square*}
        \addlegendentry{$k=2$}
        \addlegendimage{only marks,every mark/.append style={fill=gray},mark=otimes*}
        \addlegendentry{$k=3$}
        \addlegendimage{only marks,every mark/.append style={fill=gray},mark=o}
        \addlegendentry{$k=4$}

        \pgfplotsset{cycle list name=list1}
        \addplot table[x=meshsize,y={create col/linear regression={y=err_Sh}}] {dat/ep_1_1_mesh1.dat}
        coordinate [pos=0.75] (A)
        coordinate [pos=1.00] (B);
        \xdef\slopea{\pgfplotstableregressiona}
        \draw (A) -| (B) node[pos=0.75,anchor=east] {\small\pgfmathprintnumber{\slopea}};
        \addplot table[x=meshsize,y={create col/linear regression={y=err_Sh}}] {dat/ep_1_2_mesh1.dat}        
        coordinate [pos=0.75] (A)
        coordinate [pos=1.00] (B);
        \xdef\slopeb{\pgfplotstableregressiona}
        \draw (A) -| (B) node[pos=0.75,anchor=east] {\small\pgfmathprintnumber{\slopeb}};
        \addplot table[x=meshsize,y={create col/linear regression={y=err_Sh}}] {dat/ep_1_3_mesh1.dat}
        coordinate [pos=0.75] (A)
        coordinate [pos=1.00] (B);
        \xdef\slopec{\pgfplotstableregressiona}
        \draw (A) -| (B) node[pos=0.75,anchor=east] {\small\pgfmathprintnumber{\slopec}};
        \addplot table[x=meshsize,y={create col/linear regression={y=err_Sh}}] {dat/ep_1_4_mesh1.dat}
        coordinate [pos=0.75] (A)
        coordinate [pos=1.00] (B);
        \xdef\sloped{\pgfplotstableregressiona}
        \draw (A) -| (B) node[pos=0.75,anchor=east] {\small\pgfmathprintnumber{\sloped}};

        \pgfplotsset{cycle list name=list2}
        \addplot table[x=meshsize,y={create col/linear regression={y=err_en_tuh}}] {dat/ep_1_1_mesh1.dat}
        coordinate [pos=0.75] (A)
        coordinate [pos=1.00] (B);
        \xdef\slopee{\pgfplotstableregressiona}
        \draw (A) -| (B) node[pos=0.75,anchor=east] {\small\pgfmathprintnumber{\slopee}};
        \addplot table[x=meshsize,y={create col/linear regression={y=err_en_tuh}}] {dat/ep_1_2_mesh1.dat}        
        coordinate [pos=0.75] (A)
        coordinate [pos=1.00] (B);
        \xdef\slopef{\pgfplotstableregressiona}
        \draw (A) -| (B) node[pos=0.75,anchor=east] {\small\pgfmathprintnumber{\slopef}};
        \addplot table[x=meshsize,y={create col/linear regression={y=err_en_tuh}}] {dat/ep_1_3_mesh1.dat}
        coordinate [pos=0.75] (A)
        coordinate [pos=1.00] (B);
        \xdef\slopeg{\pgfplotstableregressiona}
        \draw (A) -| (B) node[pos=0.75,anchor=east] {\small\pgfmathprintnumber{\slopeg}};
        \addplot table[x=meshsize,y={create col/linear regression={y=err_en_tuh}}] {dat/ep_1_4_mesh1.dat}
        coordinate [pos=0.75] (A)
        coordinate [pos=1.00] (B);
        \xdef\slopeh{\pgfplotstableregressiona}
        \draw (A) -| (B) node[pos=0.75,anchor=east] {\small\pgfmathprintnumber{\slopeh}};
      \end{loglogaxis}
    \end{tikzpicture}
  \end{minipage}
  \begin{minipage}[b]{0.4\linewidth}\centering
    \begin{tikzpicture}[scale=0.70]
      \begin{loglogaxis}[
          legend style = { legend pos = south east },
          xmin=0.001
        ]
        \addlegendimage{only marks,every mark/.append style={fill=gray},mark=*}
        \addlegendentry{$k=1$}
        \addlegendimage{only marks,every mark/.append style={fill=gray},mark=square*}
        \addlegendentry{$k=2$}
        \addlegendimage{only marks,every mark/.append style={fill=gray},mark=otimes*}
        \addlegendentry{$k=3$}
        \addlegendimage{only marks,every mark/.append style={fill=gray},mark=o}
        \addlegendentry{$k=4$}

        \pgfplotsset{cycle list name=list1}
        \addplot table[x=meshsize,y={create col/linear regression={y=err_uh}}] {dat/ep_1_1_mesh1.dat}
        coordinate [pos=0.75] (A)
        coordinate [pos=1.00] (B);
        \xdef\slopea{\pgfplotstableregressiona}
        \draw (A) -| (B) node[pos=0.75,anchor=east] {\small\pgfmathprintnumber{\slopea}};
        \addplot table[x=meshsize,y={create col/linear regression={y=err_uh}}] {dat/ep_1_2_mesh1.dat}        
        coordinate [pos=0.75] (A)
        coordinate [pos=1.00] (B);
        \xdef\slopeb{\pgfplotstableregressiona}
        \draw (A) -| (B) node[pos=0.75,anchor=east] {\small\pgfmathprintnumber{\slopeb}};
        \addplot table[x=meshsize,y={create col/linear regression={y=err_uh}}] {dat/ep_1_3_mesh1.dat}
        coordinate [pos=0.75] (A)
        coordinate [pos=1.00] (B);
        \xdef\slopec{\pgfplotstableregressiona}
        \draw (A) -| (B) node[pos=0.75,anchor=east] {\small\pgfmathprintnumber{\slopec}};
        \addplot table[x=meshsize,y={create col/linear regression={y=err_uh}}] {dat/ep_1_4_mesh1.dat}
        coordinate [pos=0.75] (A)
        coordinate [pos=1.00] (B);
        \xdef\sloped{\pgfplotstableregressiona}
        \draw (A) -| (B) node[pos=0.75,anchor=east] {\small\pgfmathprintnumber{\sloped}};

        \pgfplotsset{cycle list name=list2}
        \addplot table[x=meshsize,y=err_tuh] {dat/ep_1_1_mesh1.dat};
        \addplot table[x=meshsize,y=err_tuh] {dat/ep_1_2_mesh1.dat};       
        \addplot table[x=meshsize,y=err_tuh] {dat/ep_1_3_mesh1.dat};
        \addplot table[x=meshsize,y=err_tuh] {dat/ep_1_4_mesh1.dat};
      \end{loglogaxis}
    \end{tikzpicture}
  \end{minipage}
\fi
\caption{Convergence results in the energy-norm (left) and $L^2$-norm (right) for the solution to~\eqref{eq:hho} (solid lines) and its post-processing based on $\cT$ (dashed lines). The right panel shows that the post-processing has no sizable effect on element unknowns.\label{fig:convergence}}
\end{figure}
The estimate~\eqref{eq:conv.rate.cT} then follows squaring the above inequality, summing over $T\in\Th$, and using~\eqref{eq:conv.rate.en} and~\eqref{eq:approx.pT.sT}, respectively, to bound the terms in the right-hand side.
\end{proof}
To assess the estimate~\eqref{eq:conv.rate.cT}, we have numerically solved the pure displacement problem with exact solution $\vu=\big(\sin(\pi x_1)\sin(\pi x_2)+\nicefrac12x_1, \cos(\pi x_1)\cos(\pi x_2)+\nicefrac12 x_2\big)$ for $\mu=\lambda=1$ on a $h$-refined sequence of triangular meshes.
The corresponding convergence results are presented in Figure~\ref{fig:convergence}.
In the left panel, we compare the quantities on the left-hand side of estimates~\eqref{eq:conv.rate.en} and~\eqref{eq:conv.rate.cT}. Although the order of convergence is the same, the original solution $\usu[h]$ displays better accuracy in the energy-norm.
This is essentially due to face unknowns, as confirmed in the right panel, where the square roots of the quantities $\sum_{T\in\Th}\norm[T]{\su_T-\hsu_T}^2$ and $\sum_{T\in\Th}\norm[T]{\cT\usu-\hsu_T}^2$ (both of which are discrete $L^2$-norms of the error) are plotted.

\bibliographystyle{elsarticle-num}
\bibliography{cons}

\end{document}